\documentclass[10pt, a4paper]{article}
\usepackage{amsmath, amsfonts}
\usepackage[usenames]{color}
\usepackage[latin1]{inputenc}
\usepackage[table, dvipsnames]{xcolor}
\usepackage{array}
\newcolumntype{P}[1]{>{\centering\arraybackslash}p{#1}}
\newcolumntype{M}[1]{>{\centering\arraybackslash}m{#1}}
\usepackage{multirow}
\usepackage{mathrsfs}
\usepackage{hyperref} %transformas o sumario e as citacoes em hiperlinks
\hypersetup{bookmarks = true, % mostra a barra de bookmarks
                 % pdftitle = {O titulo do seu TCC}, % titulo
                 % pdfauthor = {Evelin Lambertes}, % autor
                 % pdfsubject = {TCC}, % subject of the document
                 % pdfcreator = {Emacs23+Texlive2010 Debian/GNU}, % programa
                 % pdfkeywords = {natal, férias, papai noel}, % keywords
                  colorlinks = true, % false: boxed links; true: colored links
                  linkcolor = blue, % cor para links
                  citecolor = blue, % cor para links em bibliografia
                  }
\openup4pt

%\usepackage{draftwatermark} %
%\SetWatermarkText{O.N. SILVA - CONFIDENTIAL - O.N. SILVA - CONFIDENTIAL} %
%\SetWatermarkFontSize{30cm} %
%\SetWatermarkLightness{.95} %
%\SetWatermarkAngle{60} %
\newcommand{\xdownarrow}[1]{%
  {\left\downarrow\vbox to #1{}\right.\kern-\nulldelimiterspace}
}

\newcommand{\xuparrow}[1]{%
  {\left\uparrow\vbox to #1{}\right.\kern-\nulldelimiterspace}
}

\oddsidemargin=0.6cm \textwidth=16cm \textheight=24.7cm
\usepackage{graphicx}
\usepackage{amssymb}
\usepackage{amsmath}

\newtheorem{theorem}{Theorem}[section]

\newtheorem{corollary}[theorem]{Corollary}

\newtheorem{definition}[theorem]{Definition}
\newtheorem{example}[theorem]{Example}

\newtheorem{lemma}[theorem]{Lemma}

\newtheorem{remark}[theorem]{Remark}

\setlength{\topmargin}{-1cm} \setlength{\oddsidemargin}{0cm}
\setlength{\evensidemargin}{-1cm} \setlength{\textwidth}{15cm}
\setlength{\textheight}{21cm}
\newenvironment{proof}[1][Proof]{\noindent\textbf{#1.} }{\ \rule{0.5em}{0.5em}}

\title{\textbf{Equisingularity of families of map germs from the plane to $3$-space}}
\author{ \ \ \ \\{O.N. Silva\footnote{O.N. Silva: Departamento de Matemática, Universidade Federal da Paraíba (UFPB), PB, Brazil.  \hspace{5cm} e-mail: otoniel.silva@academico.ufpb.br}}}
\date{}

\begin{document}

\maketitle

\begin{abstract}
In this work, we consider a finitely determined map germ $f$ from $(\mathbb{C}^2,0)$ to $(\mathbb{C}^3,0)$. We characterize the Whitney equisingularity of an unfolding $F=(f_t,t)$ of $f$ through the constancy of a single invariant in the source. Namely, the Milnor number of the curve $W_t(f)=D(f_t)\cup f_t^{-1}(\gamma)$, where $D(f)$ denotes the double point curve of $f_t$. This gives an answer to a question by Ruas in $1994$.
\end{abstract}

\section{Introduction}

$ \ \ \ \ $ Throughout this paper, we assume that $f:(\mathbb{C}^2,0)\rightarrow(\mathbb{C}^3,0)$ is a finite, generically $1-1$, holomorphic map germ, unless otherwise stated. We consider the Whitney equisingularity of an unfolding $F=(f_t,t)$ of $f$, where $f$ is a finitely determined map germ from $(\mathbb{C}^2,0)$ to $(\mathbb{C}^3,0)$. 

For finitely determined map germs from $(\mathbb{C}^n,0)$ to $(\mathbb{C}^p,0)$, $n<p$, in the nice dimensions or their boundary, Gaffney proved (\cite[Theorem 7.3]{gaffney}) that an excellent unfolding $F$ is Whitney equisingular if, and only if, all polar invariants of all stable types of $F$ are constant. The problem is that there is a big amount of invariants to consider. However, most of the times these invariants are related and it is possible to reduce this number and to remove the condition for $F$ being excellent. This was done for example in \cite{victor}, in the case of map germs from $\mathbb{C}^3$ to $\mathbb{C}^4$, and in \cite{juanjoevictor}, in the case of map germs from $\mathbb{C}^n$ to $\mathbb{C}^{2n-1}$, $n\geq 3$. In our case, a natural question is:\\

\noindent \textbf{Question 1:} Let $f:(\mathbb{C}^2,0)\rightarrow (\mathbb{C}^3,0)$ be a finitely determined map germ and $F$ an unfolding of $f$. What is the minimum number of invariants we need to characterize the Whitney equisingularity of $F$?\\ 

In 1993, Gaffney showed that this minimum number is less than or equal to two. More precisely, he showed that the Whitney equisingularity of $F$ is equivalent to the constancy of the invariant $e_D(f_t)$, introduced by Gaffney in \cite{gaffney}, and the first polar multiplicity $m_1(f_t(\mathbb{C}^2))$ (see \cite[Theorem 8.7]{gaffney}). Moreover, if $f$ has corank $1$, then the constancy of $m_1(f_t(\mathbb{C}^2))$ is not necessary. We remark that the invariant $e_D(f)$ is defined as

\begin{center}
$e_D(f):=dim_{\mathbb{C}}\dfrac{\mathcal{O}_2}{\langle\lambda, J[\lambda, p_1 \circ f] \rangle}$
\end{center}

\noindent where $\lambda$ defines $D(f)$, $p_1$ is a generic projection from $\mathbb{C}^3$ to $\mathbb{C}$ and $J[\lambda, p_1 \circ f]$ denotes the Jacobian ideal of the map germ from $\mathbb{C}^2$ to $\mathbb{C}^2$ defined by $\lambda$ and $p_1 \circ f$. Note that the invariant $e_D(f)$ is calculated in the source of $f$. On the other hand, the invariant $m_1(f(\mathbb{C}^2))$ is related with the target of $f$. So another related question is:\\

%Note that $(D(f),0) \subset (\mathbb{C}^2,0)$, so the number $\mu(D(f_t))$ is an invariant in the source of $F$. On the other hand, the numbers $m_0(f_t(D(f_t)))$ and $m_1(f_t(\mathbb{C}^2))$ are invariants related with the target of $F$.

%In \cite[Corollary 40]{bobadilla}, Fernández de Bobadilla and Pe Pereira showed that the topological triviality of $F$ is characterized through the constancy of a single invariant in the source (see also \cite[Theorem 6.2]{ruas2006}). More precisely, they showed that $F$ is topologically trivial if and only if $\mu(D(f_t))$ is constant. So another related question is:\\

\noindent \textbf{Question 2:} Let $f$ and $F$ as in Question 2. Can the Whitney equisingularity of $F$ be controlled by the constancy of invariants calculated only in the source? More specifically, the Milnor number of a single curve in the source?\\ 

In 1994, Ruas conjectured that Question $2$ has a positive answer. For Question $1$, she also conjectured that the minimum number of invariants needed to control the Whitney equisingularity of $F$ is one. More precisely, she conjectured that $F$ is Whitney equisingular if and only if $\mu(D(f_t))$ is constant (\cite{ruas} see also \cite[Conjecture 1.2]{otoniel1}). In 2018, almost $25$ years later, Ruas and the author presented examples where $\mu(D(f_t))$ is constant and $F$ is not Whitney equisingular, showing that Ruas's conjecture is not true (see \cite{otoniel1} and \cite{otoniel3}).

In 2012, Marar, Nuño-Ballesteros y Peñafort-Sanchis showed that $F$ is Whitney equisingular if and only if the Milnor numbers of $D(f_t)$ and $\gamma_t$ are constant, where $\gamma_t$ is the intersection of $f_t(\mathbb{C}^2)$ with a generic plane in $\mathbb{C}^3$, the transversal slice of $f_t(\mathbb{C}^2)$ (see \cite[Theorem 5.3]{ref9}). However, with the examples presented in \cite{otoniel1}, it seems that the invariants $\mu(D(f))$ and $\mu(\gamma_t,0)$ are independent, in the sense that the constancy of one does not imply the constancy of the other and vice versa.

In this paper, we provide answers to Questions 1 and 2. More precisely, we define a new invariant, namely, the Milnor number of the curve $W:=D(f) \cup f^{-1}(\gamma)\subset \mathbb{C}^2$ and we prove the following result:  

\begin{theorem}\label{main result 3} Let $f:(\mathbb{C}^2,0)\rightarrow(\mathbb{C}^3,0)$ be a finitely determined map germ and let $F:(\mathbb{C}^2 \times \mathbb{C},0)\rightarrow (\mathbb{C}^3 \times \mathbb{C},0)$, $F=(f_t,t)$, be an unfolding of $f$. Set $W(f_t):=D(f_t)\cup f_t^{-1}(\gamma_t)$, where $\gamma_t$ is the transversal slice of $f_t(\mathbb{C}^2)$. Then

\begin{center}
$F$ is Whitney equisingular $ \ \ \ $ $\Leftrightarrow$ $ \ \ \ $ $\mu(W(f_t),0)$ is constant.
\end{center}
\end{theorem}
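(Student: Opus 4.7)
\bigskip

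\noindent\textbf{Proof plan.} The strategy is to reduce Theorem \ref{main result 3} to the characterization by Marar, Nu\~no-Ballesteros and Pe\~nafort-Sanchis, which says that $F$ is Whitney equisingular if and only if both $\mu(D(f_t),0)$ and $\mu(\gamma_t,0)$ are constant. So it suffices to prove the purely numerical statement
\[
\mu(W(f_t),0)\ \text{is constant}\quad\Longleftrightarrow\quad \mu(D(f_t),0)\ \text{and}\ \mu(\gamma_t,0)\ \text{are constant.}
\]
For a generic choice of the hyperplane cutting out $\gamma$, the curves $D(f)$ and $f^{-1}(\gamma)$ are reduced plane curve germs in $(\mathbb{C}^2,0)$ with no common branch, so one can apply the classical Hironaka-type formula for the Milnor number of a union of plane curves:
\[
\mu(W(f),0)=\mu(D(f),0)+\mu(f^{-1}(\gamma),0)+2\,i_{0}\bigl(D(f),f^{-1}(\gamma)\bigr)-1.
\]

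\noindent First I would verify that each of the three summands behaves well under the unfolding $F$: $\mu(D(f_t),0)$ and $\mu(f_t^{-1}(\gamma_t),0)$ are Milnor numbers of plane curve deformations, hence upper semi-continuous, and the intersection number $i_{0}(D(f_t),f_t^{-1}(\gamma_t))$ is also upper semi-continuous under flat deformation. Consequently, if $\mu(W(f_t),0)$ is constant then each summand is separately constant, and in the other direction constancy of each summand immediately gives constancy of $\mu(W(f_t),0)$.

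\noindent Second, I would connect $\mu(f^{-1}(\gamma),0)$ with $\mu(\gamma,0)$. Since $f$ restricts to a finite generically $1\text{-}1$ map $f^{-1}(\gamma)\to\gamma$, whose only identifications come from the points where $f^{-1}(\gamma)$ meets $D(f)$, a $\delta$-invariant bookkeeping (using $\mu=2\delta-r+1$ for reduced plane curves and tracking the branches of $\gamma$ at its nodes) yields a formula of the shape
\[
\mu(\gamma,0)=\mu(f^{-1}(\gamma),0)+2\,i_{0}\bigl(D(f),f^{-1}(\gamma)\bigr)+c(f),
\]
where $c(f)$ is controlled by stable invariants (essentially the number of triple points of $f$ and the pinch points of $f$ on the slice, both of which are constant in any one-parameter unfolding by Gaffney's results and the stability along $t$). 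Combining this with Step~1 rewrites $\mu(W(f_t),0)$ as an expression in $\mu(D(f_t),0)$, $\mu(\gamma_t,0)$ and invariants that are automatically constant under the unfolding, from which one can read off the equivalence.

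\noindent Finally, the forward direction is the easy half: Whitney equisingularity of $F$ forces constancy of all stable invariants and of both Milnor numbers $\mu(D(f_t),0)$, $\mu(\gamma_t,0)$, so each piece entering the decomposition of $\mu(W(f_t),0)$ is constant and hence so is their sum. For the reverse direction, one uses upper semi-continuity as above to recover the constancy of $\mu(D(f_t),0)$ and, via the formula relating $\mu(\gamma_t,0)$ and $\mu(f_t^{-1}(\gamma_t),0)$, the constancy of $\mu(\gamma_t,0)$, and then appeals to the Marar--Nu\~no-Ballesteros--Pe\~nafort-Sanchis theorem. The main obstacle I expect is Step~2: establishing the precise relation between $\mu(f^{-1}(\gamma),0)$ and $\mu(\gamma,0)$ in a form in which the ``correction term'' $c(f)$ is genuinely a stable invariant of the unfolding, so that constancy of $\mu(W(f_t),0)$ can be legitimately transferred to constancy of $\mu(\gamma_t,0)$ and not just of the pullback.
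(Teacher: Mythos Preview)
Your overall strategy---decompose $\mu(W(f_t),0)$ via the union formula for plane curves, invoke upper semi-continuity of the summands, and reduce to the Marar--Nu\~no-Ballesteros--Pe\~nafort-Sanchis criterion---is exactly the paper's route. The difference lies in how the intersection term is handled, and this is precisely where your self-identified ``main obstacle'' (Step~2) arises and where the paper avoids it.

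The paper does \emph{not} try to pass from $\mu(f^{-1}(\gamma),0)$ back to $\mu(\gamma,0)$. Instead it proves the concrete identity
\[
i_0\bigl(D(f),\,f^{-1}(\gamma)\bigr)\;=\;2\,m\bigl(f(D(f))\bigr),
\]
by parametrising each branch $D(f)^i$, distinguishing identification versus fold components, and using that the generic plane $H$ meets the tangent cone of $f(D(f))$ only at $0$ (so that the order of the first coordinate along $f\circ\varphi_i$ is $m(f(D(f)^i))$ or $2m(f(D(f)^i))$, respectively). Plugging this into the union formula yields
\[
\mu(W(f_t),0)=\mu(D(f_t),0)+\mu(\tilde\gamma(f_t),0)+4\,m\bigl(f_t(D(f_t))\bigr)-1,
\]
a sum of three upper semi-continuous invariants. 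The paper then cites \cite[Theorem 5.3]{ref9} directly for the equivalence with Whitney equisingularity, so no separate comparison between $\mu(\tilde\gamma_t)$ and $\mu(\gamma_t)$ is needed.

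Thus your Step~2 is an unnecessary detour. Moreover, your proposed shape for the correction term $c(f)$ is off: for a generic transversal slice through the origin of a finitely determined germ there are no cross-caps or triple points on the slice (those lie away from $0$), so the bookkeeping you sketch would not produce the term you expect. What actually replaces it is the multiplicity $m(f(D(f)))$, obtained from the intersection identity above; once you have that, the appeal to \cite{ref9} closes the argument in one line.
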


Note that Theorem \ref{main result 3} means that part of Ruas's conjecture is true, that is, the minimum number mentioned in Question 1 is in fact one and Question 2 has a positive answer. Since all branches of the curve $D(f)$ also are branches of the curve $W(f)$, we see that Ruas's conjecture was not so far from the characterization of Whitney equisingularity through the constancy of a single invariant in the source.

\section{Preliminaries}

$ \ \ \ \ $ Throughout this paper, given a finite map $f:\mathbb{C}^2\rightarrow \mathbb{C}^3$, $(x,y)$ and $(X,Y,Z)$ are used to denote systems of coordinates in $\mathbb{C}^2$ (source) and $\mathbb{C}^3$ (target), respectively. Also, $\mathbb{C} \lbrace x_1,\cdots,x_n \rbrace \simeq \mathcal{O}_n$ denotes the local ring of convergent power series in $n$ variables. Throught, we use the standard notation of singularity theory as the reader can find in Wall's survey paper \cite{wall}.

We recall the notion of $\mathcal{A}$-finite determinacy, where $\mathcal{A}$ is the group of coordinates change in the source and in the target, as defined by Mather in \cite{wall}. We say that two map germs $f,g:(\mathbb{C}^n,0)\rightarrow (\mathbb{C}^{p},0)$ are $\mathcal{A}$-equivalent, denoted by $g\sim_{\mathcal{A}}f$, if there exist germs of diffeomorphisms $\eta:(\mathbb{C}^n,0)\rightarrow (\mathbb{C}^n,0)$ and $\xi:(\mathbb{C}^{p},0)\rightarrow (\mathbb{C}^{p},0)$, such that $g=\xi \circ f \circ \eta$. 

Consider a finite map germ $f:(\mathbb{C}^2,0)\rightarrow (\mathbb{C}^3,0)$. By Mather-Gaffney criterion (\rm\cite[Theorem 2.1]{wall}), $f$ is finitely determined if and only if there is a finite representative $f:U \rightarrow V$, where $U\subset \mathbb{C}^2$, $V \subset \mathbb{C}^3$ are open neighbourhoods of the origin, such that $f^{-1}(0)=\lbrace 0 \rbrace$ and the restriction $f:U \setminus \lbrace 0 \rbrace \rightarrow V \setminus \lbrace 0 \rbrace$ is stable. This means that the only singularities of $f$ on $U \setminus \lbrace 0 \rbrace$ are cross-caps (or Whitney umbrellas), transverse double and triple points. By shrinking $U$ if necessary, we can assume that there are no cross-caps nor triple points in $U$. With this point of view, the set of singularities that will be important in this setting is the set of double points of $f$.

\subsection{Double point spaces}

Multiple point spaces of a map germ from $(\mathbb{C}^n,0)$ to $(\mathbb{C}^p,0)$ with $n\leq p$ play an important role in the study of its geometry. In this work, we will deal only with double points, which will be fundamental to control the finiteness determinacy of $f$. We describe the sets of double points of a finite map from $\mathbb{C}^2$ to $\mathbb{C}^3$ following the description given in \cite[Section 2]{ref9} (and also in \cite[Section 1]{ref7} and \cite[Section 3]{ref12}). Let $U \subset \mathbb{C}^2$ and $V \subset \mathbb{C}^3$ be open sets. Throughout we assume that a map $f:U\rightarrow V$ is finite, that is, holomorphic, closed and finite-to-one, unless otherwise stated.

Following Mond \cite[Section 3]{ref12}, we define spaces related to the double points of a given finite mapping $f:U \rightarrow V$, by firstly considering the sheaf $\mathcal{I}_{2}$ and $\mathcal{I}_{3}$  defining the diagonal of $\mathbb{C}^2 \times \mathbb{C}^2$ and $\mathbb{C}^3 \times \mathbb{C}^3$, respectively. That is, locally 

\begin{center}
 $\mathcal{I}_{2}=\langle x-x^{'},y-y^{'} \rangle$ $ \ \ \ $ and $ \ \ \ $ $\mathcal{I}_{3}=\langle X-X^{'},Y-Y^{'}, Z-Z^{'} \rangle$.
 \end{center} 

Since the pull-back $(f \times f)^{\ast}\mathcal{I}_{3}$ is contained in $\mathcal{I}_{2}$ and $U$ is small enough, then there exist sections $\alpha_{ij}\in \mathcal{O}_{\mathbb{C}^{2} \times \mathbb{C}^2}(U \times U)$ well defined in all $U \times U$, such that

\begin{center}
$f_{i}(x,y)-f_{i}(x^{'},y^{'})= \alpha_{i1}(x,y,x^{'},y^{'})(x-x^{'})+ \alpha_{i2}(x,y,x^{'},y^{'})(y-y^{'}), \ for \ i=1,2,3.$
\end{center}

If $f(x,y)=f(x^{'},y^{'})$ and $(x,y) \neq (x^{'},y^{'})$, then every $2 \times 2$ minor of the matrix $\alpha=(\alpha_{ij})$ must vanish at $(x,y,x^{'},y^{'})$. Denote by $\mathcal{R}(\alpha)$ the ideal in $\mathcal{O}_{\mathbb{C}^{4}}$ generated by the $2\times 2$ minors of $\alpha$. Then we have the following definition.

\begin{definition} The \textit{lifting of the double point space of $f$} is the complex space

\[
D^{2}(f)=V((f\times f)^{\ast}\mathcal{I}_{3}+\mathcal{R}(\alpha)).
\]

\end{definition}

Although the ideal $\mathcal{R}(\alpha)$ depends on the choice of the coordinate functions of $f$, in \cite[Section 3]{ref12} it is proved that the ideal $(f\times f)^{\ast}\mathcal{I}_{3}+\mathcal{R}(\alpha)$ does not, and so $D^{2}(f)$ is well defined. It is not hard to see that the points in the underlying set of $D^{2}(f)$ are exactly the ones in $U \times U$ of type $(x,y,x^{'},y^{'})$ with $(x,y) \neq (x^{'},y^{'})$, $f(x,y)=f(x^{'},y^{'})$ and of type $(x,y,x,y)$ such that $(x,y)$ is a singular point of $f$.

Let $f:(\mathbb{C}^2,0)\rightarrow(\mathbb{C}^{3},0)$ be a finite map germ and denote by $I_{3}$ and $R(\alpha)$ the stalks at $0$ of $\mathcal{I}_{3}$ and $\mathcal{R}(\alpha)$. By taking a representative of $f$, we define the \textit{lifting of the double point space of the map germ $f$} as the complex space germ $D^{2}(f)=V((f \times f)^{\ast}I_{3}+R(\alpha))$.

Once the lifting $D^2(f) \subset \mathbb{C}^2 \times \mathbb{C}^2$ is defined, we now consider its image $D(f)$ on $\mathbb{C}^2$ by the projection $\pi:\mathbb{C}^2 \times \mathbb{C}^2 \rightarrow \mathbb{C}^2$ onto the first factor. The most appropriate structure for $D(f)$ is the one given by the Fitting ideals, because it relates in a simple way the properties of the spaces $D^2(f)$ and $D(f)$. 

Another important space to study the topology of $f(\mathbb{C}^{2})$ is the double point curve in the target, that is, the image of $D(f)$ by $f$, denoted by $f(D(f))$, which will also be consider with the structure given by Fitting ideals. 

More precisely, given a finite morphism of complex spaces $f:X\rightarrow Y$ the push-forward $f_{\ast}\mathcal{O}_{X}$ is a coherent sheaf of $\mathcal{O}_{Y}-$modules (see \cite[Chapter 1]{grauert}) and to it we can (as in \rm\cite[Section 1]{ref13}) associate the Fitting ideal sheaves $\mathcal{F}_{k}(f_{\ast}\mathcal{O}_{X})$. Notice that the support of $\mathcal{F}_{0}(f_{\ast}\mathcal{O}_{X})$ is just the image $f(X)$. Analogously, if $f:(X,x)\rightarrow(Y,y)$ is a finite map germ then we denote also by $ \mathcal{F}_{k}(f_{\ast}\mathcal{O}_{X})$ the \textit{k}th Fitting ideal of $\mathcal{O}_{X,x}$ as $\mathcal{O}_{Y,y}-$module. In this way, we have the following definition:

\begin{definition} Let $f:U \rightarrow V$ be a finite mapping.\\

\noindent (a) Let ${\pi}|_{D^2(f)}:D^2(f) \subset U \times U \rightarrow U$ be the restriction to $D^2(f)$ of the projection $\pi$. The \textit{double point space} is the complex space

\begin{center}
$D(f)=V(\mathcal{F}_{0}({\pi}_{\ast}\mathcal{O}_{D^2(f)}))$.
\end{center}

\noindent Set theoretically we have the equality $D(f)=\pi(D^{2}(f))$.\\

\noindent (b) The \textit{double point space in the target} is the complex space $f(D(f))=V(\mathcal{F}_{1}(f_{\ast}\mathcal{O}_2))$. Notice that the underlying set of $f(D(f))$ is the image of $D(f)$ by $f$.\\ 

\noindent (c) Given a finite map germ $f:(\mathbb{C}^{2},0)\rightarrow (\mathbb{C}^3,0)$, \textit{the germ of the double point space} is the germ of complex space $D(f)=V(F_{0}(\pi_{\ast}\mathcal{O}_{D^2(f)}))$. \textit{The germ of the double point space in the target} is the germ of the complex space $f(D(f))=V(F_{1}(f_{\ast}\mathcal{O}_2))$.

\end{definition}

\begin{remark} If $f:U \subset \mathbb{C}^2 \rightarrow V \subset \mathbb{C}^3 $ is finite and generically $1$-to-$1$, then $D^2(f)$ is Cohen-Macaulay and has dimension $1$ (see \rm\cite[\textit{Prop. 2.1}]{ref9}\textit{). Hence, $D^2(f)$, $D(f)$ and $f(D(f))$ are curves. In this case, without any confusion, we also call these complex spaces by the ``lifting of the double point curve'', the ``double point curve'' and the ``image of the double point curve'', respectively.}
\end{remark}

We note that the set $D(f)$ plays a fundamental role in the study of the finite determinacy. In \cite[Theorem 2.14]{ref7}, Marar and Mond presented necessary and sufficient conditions for a map germ $f:(\mathbb{C}^n,0)\rightarrow (\mathbb{C}^p,0)$ with corank $1$ to be finitely determined in terms of the dimensions of $D^2(f)$ and other multiple points spaces. In \cite{ref9}, Marar, Nu\~{n}o-Ballesteros and Pe\~{n}afort-Sanchis extended in some way this criterion of finite determinacy to the corank $2$ case. More precisely, they proved the following result:

\begin{theorem}\rm(\cite[Corollary 3.5]{ref9})\label{criterio} \textit{
Let $f:(\mathbb{C}^2,0)\rightarrow(\mathbb{C}^{3},0)$ be a finite and generically $1$ $-$ $1$ map germ. Then $f$ is finitely determined if and only if $\mu(D(f))$ is finite (equivalently, $D(f)$ is a reduced curve).}
\end{theorem}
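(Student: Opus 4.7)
The plan is to combine the Mather--Gaffney criterion recalled in the excerpt with a local analysis at each candidate singularity type of $f$, using the Cohen--Macaulayness of $D(f)$ and $D^2(f)$ stated in the preceding remark. Since a $1$-dimensional Cohen--Macaulay germ has finite Milnor number at a point precisely when it is reduced there, and a reduced $1$-dimensional analytic germ is smooth in some punctured neighborhood of the base point, the statement reduces to the equivalence that $f$ is finitely determined if and only if $D(f)\setminus\{0\}$ is smooth in some representative.

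For the forward direction, I would fix a representative provided by Mather--Gaffney and recall the classification of stable germs $(\mathbb{C}^2,\ast)\to(\mathbb{C}^3,\ast)$: immersions, cross-caps, transverse double points, and transverse triple points. Cross-caps and triple points are $0$-dimensional strata in the source, of which only finitely many occur off $0$, so after shrinking $U$ as indicated in the excerpt only immersions and transverse double points remain on $U\setminus\{0\}$. At each transverse double point $p$, $D^2(f)$ is locally two smooth arcs exchanged by the natural involution, each projecting isomorphically onto a smooth branch of $D(f)$ through the corresponding source point; elsewhere $f$ is an immersion and $D(f)$ is either empty or a smooth branch through $p$ coming from the other preimage of a double value. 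Hence $D(f)\setminus\{0\}$ is smooth, and Cohen--Macaulayness promotes this to reducedness of $D(f)$ and finiteness of $\mu$.

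For the converse, I would assume $\mu(D(f),0)<\infty$ and fix a small representative on which $D(f)\setminus\{0\}$ is smooth and $f$ is finite; the condition $f^{-1}(0)=\{0\}$ then follows from finiteness after shrinking, so by Mather--Gaffney it suffices to show that $f$ is stable at every $p\in U\setminus\{0\}$. Away from $D(f)$ the map is an immersion, hence stable. For $p\in D(f)\setminus\{0\}$ one argues by cases on the local singularity type: if $p$ is a singular point of $f$ where the differential drops rank, smoothness of $D(f)$ at $p$ rules out degenerate Whitney-like models and leaves only the stable cross-cap; if $p$ is a regular double preimage, smoothness of $D(f)$ at $p$ forces the double crossing to be transverse, and rules out a third sheet through $f(p)$ (a third sheet would produce a nodal singularity of $D(f)$ at $p$). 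In every case the local model is stable, and $f$ is finitely determined.

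The hard part is the local computation underlying the converse: one must show, for each non-stable local normal form of a map germ at a point of $D(f)\setminus\{0\}$, that the Fitting structure on $D(f)$ acquires either an embedded component or a genuine singular point at that base point. This rests on the precise relationship between the defining ideal $(f\times f)^{\ast}I_{3}+R(\alpha)$ of $D^2(f)\subset U\times U$ and the Fitting ideal $F_{0}(\pi_{\ast}\mathcal{O}_{D^2(f)})$ defining $D(f)\subset U$, together with the Cohen--Macaulay property of $D^2(f)$, and is the technical heart of the Marar--Nu\~{n}o-Ballesteros--Pe\~{n}afort-Sanchis criterion.
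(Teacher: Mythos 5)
The paper does not actually prove this statement: it is quoted verbatim from Marar--Nu\~no-Ballesteros--Pe\~nafort-Sanchis \cite[Corollary 3.5]{ref9}, so there is no internal argument to compare yours against. Judged on its own, your outline correctly identifies the overall strategy of that reference --- Mather--Gaffney plus a pointwise analysis of local singularity types on $U\setminus\{0\}$, with Cohen--Macaulayness of the one-dimensional Fitting-ideal structure converting ``reduced off the origin'' into ``reduced''. The forward direction as you sketch it is essentially complete: stability off $0$, removal of cross-caps and triple points by shrinking $U$, smoothness of $D(f)$ at transverse double points, and the fact that a generically reduced hypersurface germ is reduced, hence has finite Milnor number.

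The converse, however, has a genuine gap: the entire content of the theorem in that direction is the claim that every \emph{non-stable} local (multi-)germ of $f$ at a point $p\in U\setminus\{0\}$ forces the Fitting structure on $D(f)$ to be non-reduced or singular at $p$, and you name this as ``the technical heart'' without carrying it out. Phrases such as ``smoothness of $D(f)$ at $p$ rules out degenerate Whitney-like models and leaves only the stable cross-cap'' are exactly what must be proved, case by case, from the definition of $D^2(f)$ via $(f\times f)^{\ast}I_{3}+R(\alpha)$ and of $D(f)$ via $F_{0}(\pi_{\ast}\mathcal{O}_{D^2(f)})$. A second, related imprecision: your opening reduction replaces ``$\mu(D(f))<\infty$'' by ``$D(f)\setminus\{0\}$ is smooth'', which is only correct if ``smooth'' is read scheme-theoretically (reduced and regular). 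This matters in the converse: two sheets meeting \emph{tangentially} along a curve produce a $D(f)$ that is set-theoretically a smooth arc but carries a non-reduced structure, so the case analysis must detect failure of reducedness, not merely a set-theoretic singular point. As written, the proposal is a plausible road map for the proof in \cite{ref9}, but not a proof.
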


\subsection{Identification and Fold components of $D(f)$}\label{foldcomp}

%- Definir componentes de identificação e de dobra para uma $f$ finitamente determinada e dar um exemplo bonito!

$ \ \ \ \ $ When $f:(\mathbb{C}^2,0)\rightarrow (\mathbb{C}^3,0)$ is finitely determined, the restriction of (a representative) $f$ to $D(f)$ is finite. In this case, $f_{|D(f)}$ is generically $2$-to-$1$ (i.e; $2$-to-$1$ except at $0$). On the other hand, the restriction of $f$ to an irreducible component $D(f)^i$ of $D(f)$ can be generically $1$-to-$1$ or $2$-to-$1$. This motivates us to give the following definition which is from \cite[Definition 4.1]{otoniel3} (see also \cite[Definition 2.4]{otoniel1}).

\begin{definition}\label{typesofcomp} Let $f:(\mathbb{C}^2,0)\rightarrow (\mathbb{C}^3,0)$ be a finitely determined map germ. Let $f:U\rightarrow V$ be a representative, where $U$ and $V$ are neighbourhoods of $0$ in $\mathbb{C}^2$ and $\mathbb{C}^3$, respectively, and consider an irreducible component $D(f)^j$ of $D(f)$.\\

\noindent (a) If the restriction ${f_|}_{D(f)^j}:D(f)^j\rightarrow V$ is generically $1$ $-$ $1$, we say that $D(f)^j$ is an \textit{identification component of} $D(f)$.\\ 

In this case, there exists an irreducible component $D(f)^i$ of $D(f)$, with $i \neq j$, such that $f(D(f)^j)=f(D(f)^i)$. We say that $D(f)^i$ is the \textit{associated identification component to} $D(f)^j$ or that the pair $(D(f)^j, D(f)^i)$ is a \textit{pair of identification components of} $D(f)$.  \\

\noindent (b) If the restriction ${f_|}_{D(f)^j}:D(f)^j\rightarrow V$ is generically $2$ $-$ $1$, we say that $D(f)^j$ is a \textit{fold component of} $D(f)$.\\

\noindent (c) We define the sets $IC(D(f))=\lbrace $identification components of $D(f) \rbrace $ and  $FC(D(f))= \lbrace \  $fold components of $D(f) \rbrace $. And we define the numbers $r_i:=\sharp IC(D(f))$ and $r_f:= \sharp FC(D(f))$.  
\end{definition}

The following example illustrates the two types of irreducible components of $D(f)$ presented in Definition \ref{typesofcomp}.

\begin{example}\label{example}
\textit{Let $f(x,y)=(x,y^2,xy^3-x^5y)$ be the singularity $C_5$ of Mond's list} \rm(\cite[\textit{p.378}]{mond6}). \textit{In this case, $D(f)=V(xy^2-x^5)$. Then $D(f)$ has three irreducible components given by}

\begin{center}
$D(f)^1=V(x^2-y), \ \ \ $ $D(f)^2=V(x^2+y) \ \ $ and $ \ \ D(f)^3=V(x)$. 
\end{center}

\textit{Notice that $D(f)^3$ is a fold component and $(D(f)^1$, $D(f)^2)$ is a pair of identification components. Also, we have that $f(D(f)^3)=V(X,Z)$ and $f(D(f)^1)=f(D(f)^2)=V(Y-X^4,Z)$ (see Figure \rm\ref{figura38}).}\\

\begin{figure}[h]
\centering
\includegraphics[scale=0.3]{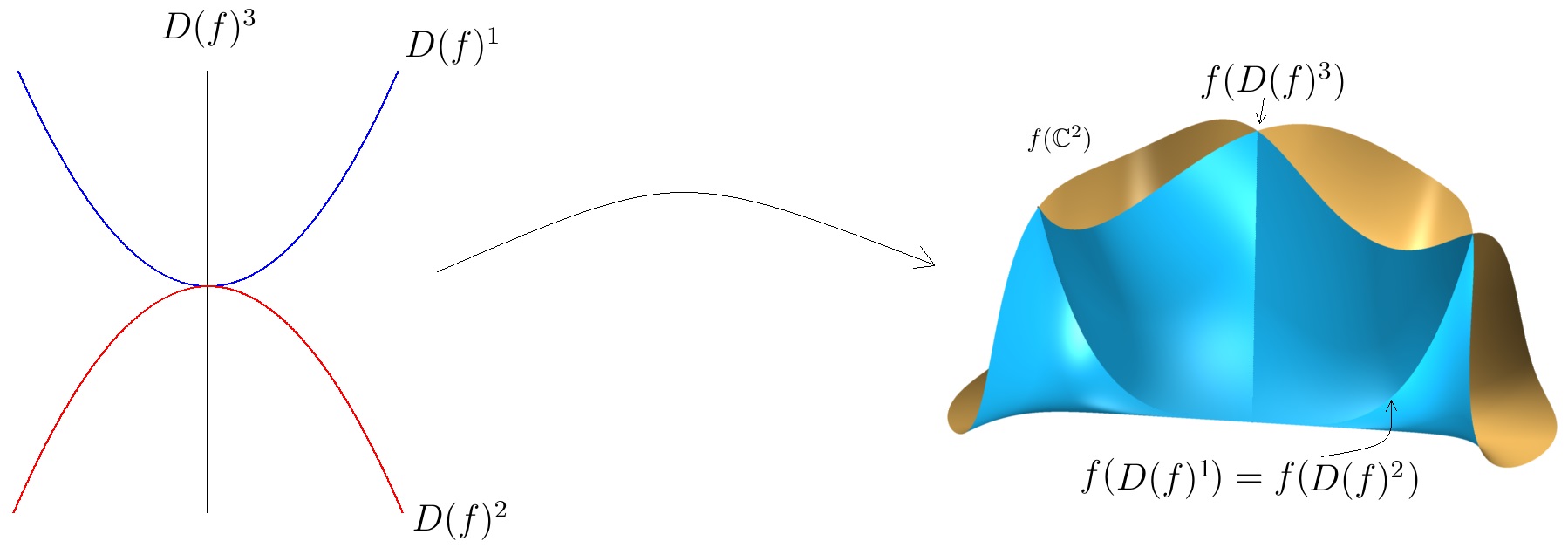} 
\caption{Identification and fold components of $D(f)$ (real points)}\label{figura38}
\end{figure}

\end{example}

\section{Whitney equisingularity}

$ \ \ \ \ $ Let $f:(\mathbb{C}^2,0)\rightarrow (\mathbb{C}^3,0)$ be a finitely determined map germ. If $H$ is a generic plane in $\mathbb{C}^3$, then we call the curve $\gamma(f):=f(\mathbb{C}^2) \cap H$ by transversal slice of $f(\mathbb{C}^2)$. The pre-image of $\gamma(f)$ by $f$ will be denoted by $\tilde{\gamma}(f)$. The intersection multiplicity of two planes curves $C^1$ and $C^2$ at $0$ will be denoted by $i(C^1,C^2)$. We recall that $i(C^1,C^2)\geq m(C^1,0)\cdot m(C^2,0)$, the product of the multiplicities of $(C^1,0)$ and $(C^2,0)$. Now, we define our new invariant as the Milnor number of the plane curve $W(f)$ defined as

\begin{center}
$W(f):=D(f) \cup \tilde{\gamma}(f)$.
\end{center}

The following lemma shows how the Milnor number is related with other invariants of $f$.

\begin{lemma}\label{lemmaaux4} Let $f:(\mathbb{C}^2,0)\rightarrow (\mathbb{C}^3,0)$ be a finitely determined map germ. Then\\

\noindent (a) $i(D(f),\tilde{\gamma}(f))=2 m(f(D(f))$.

\noindent (b) $m(D(f))\cdot m(\tilde{\gamma}) \leq 2m(f(D(f)))$. In particular, if $f$ has corank $1$, then $m(D(f)) \leq 2m(f(D(f)))$.

\noindent (c) $\mu(D(f))+\mu(\tilde{\gamma})+4m(f(D(f))-1$.

\end{lemma}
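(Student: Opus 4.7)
The plan is to establish (a) first, and then to derive (b) and (c) from (a) using standard plane-curve techniques.

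For part (a), the key input is that $f|_{D(f)}$ is finite and generically $2$-to-$1$ onto $f(D(f))$. I would decompose $D(f)$ into its irreducible components according to Definition \ref{typesofcomp}. Writing $\tilde{\gamma}(f)=f^{-1}(H)$ for a generic plane $H$ through the origin, a projection-formula argument for intersection multiplicities under a finite map of curves gives the following: for each fold component $D(f)^j$, the restriction $f|_{D(f)^j}$ is generically $2$-to-$1$ onto $f(D(f)^j)$, so $i(D(f)^j,\tilde{\gamma}(f))=2\,m(f(D(f)^j))$; for a pair of identification components $(D(f)^j,D(f)^k)$ with common image $E=f(D(f)^j)=f(D(f)^k)$, each restriction is generically $1$-to-$1$, so $D(f)^j$ and $D(f)^k$ each contribute $m(E)$, giving $2\,m(E)$ for the pair. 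Summing over all distinct components of $f(D(f))$ yields $i(D(f),\tilde{\gamma}(f))=2\,m(f(D(f)))$.

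Part (b) then follows by combining the general inequality $i(C^1,C^2)\geq m(C^1)\cdot m(C^2)$ recalled in the paper with the identity from (a). In the corank $1$ case, I would use the normal form $f=(x,p(x,y),q(x,y))$ with $p,q\in\mathfrak{m}^2$. A generic plane through the origin has equation $aX+bY+cZ=0$ with $a\neq 0$, and its pullback is $ax+bp(x,y)+cq(x,y)=0$, whose lowest-order term is the linear form $ax$. Therefore $\tilde{\gamma}(f)$ is smooth at $0$, so $m(\tilde{\gamma})=1$, and the sharper inequality $m(D(f))\leq 2m(f(D(f)))$ is immediate.

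For part (c), I would invoke the classical Milnor-additivity formula $\mu(C^1\cup C^2)=\mu(C^1)+\mu(C^2)+2\,i(C^1,C^2)-1$ for plane curves with no common component, which follows from $\mu=2\delta-r+1$ together with the additivities $\delta(C^1\cup C^2)=\delta(C^1)+\delta(C^2)+i(C^1,C^2)$ and $r(C^1\cup C^2)=r(C^1)+r(C^2)$. Genericity of $H$ ensures that $D(f)$ and $\tilde{\gamma}(f)$ share no common component, so applying the formula to $W(f)=D(f)\cup\tilde{\gamma}(f)$ and substituting (a) gives the identity $\mu(W(f))=\mu(D(f))+\mu(\tilde{\gamma})+4\,m(f(D(f)))-1$.

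The main obstacle will be the projection-formula step in (a): one must verify that a generic $H$ intersects each branch of $f(D(f))$ transversally at smooth points distinct from $0$, and that the local degree of $f|_{D(f)^j}$ over each such intersection point agrees with the generic $2$-to-$1$ or $1$-to-$1$ behaviour dictated by the classification into fold and identification components. The decomposition in Definition \ref{typesofcomp} is precisely what organizes this counting, and once it is in hand the remaining ingredients are classical.
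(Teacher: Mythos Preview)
Your proposal is correct and follows essentially the same route as the paper: part (a) via the fold/identification decomposition of $D(f)$ with each branch contributing its degree times the multiplicity of its image (the paper carries this out through explicit parametrizations and order computations rather than invoking a projection formula, but the content is identical), part (b) from (a) together with $i\geq m\cdot m$, and part (c) from the Milnor-additivity formula for a union of two plane curves (the paper uses the branch-wise Buchweitz--Greuel formula and regroups, which amounts to the same identity). Your justification of the corank-$1$ clause in (b) via the normal form is in fact more explicit than the paper, which leaves that step to the reader.
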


\begin{proof} (a) Let $(X,Y,Z)$ be the coordinates of $\mathbb{C}^3$. After a change of coordinates, we can suppose that the plane $H=V(X)$ is generic for $f=(f_1,f_2,f_3)$, that is, $\gamma(f)=f(\mathbb{C}^2) \cap V(X)$. By the genericity of $H$, we can suppose that 

\begin{equation}\label{eq11}
C(f(D(f))) \cap H= \lbrace 0 \rbrace.
\end{equation}

\noindent where $C(f(D(f)))$ denotes the Zariski tangent cone of $f(D(f))$. 

This means that if $v=(v_1,v_2,v_3)$ is the direction of the tangent vector of an irreducible component of $f(D(f))$, then $v_1 \neq 0$. Let $D(f)^i$ be an irreducible component of $D(f)$ and consider its image $f(D(f)^i)$ by $f$. Consider a parametrization $\varphi_i:W\rightarrow D(f)^i$ of $D(f)^i$ defined by

\begin{center}
$\varphi_{i}(u)=(\varphi_{i,1}(u),\varphi_{i,2}(u))$,
\end{center}

\noindent where $W$ is a neighboorhood of $0$ in $\mathbb{C}$ and $\varphi_{i,1}(u), \varphi_{i,1}(u)\in \mathbb{C}\lbrace u \rbrace$. 

Consider the mapping $\hat{\varphi}_{i}: W \rightarrow f(D(f)^i)$, defined by  

\begin{equation}\label{eq12}
\hat{\varphi}_{i}(u):=(f_1(\varphi_i(u)),f_2(\varphi_i(u)),f_3(\varphi_i(u))).
\end{equation}

Suppose that $D(f)^i$ is an identification component of $D(f)$. Since the restriction of $f$ to $D(f)^i$ is generically $1$-to-$1$, the mapping $\hat{\varphi}_{i}$ in (\ref{eq12}) is a parametrization of $f(D(f)^i)$ (a primitive one). On the other hand, if $D(f)^i$ is a fold component of $D(f)$ then the restriction of $f$ to $D(f)^i$ is $2$-to-$1$, hence the mapping $\hat{\varphi}_{i}$ in (\ref{eq12}) is generically $2$-to-$1$ and it is also a parametrization of $f(D(f)^i)$ (a double cover one). 

The condition (\ref{eq11}) implies that $ord_u(f_1(\varphi_i(u)))=m(f(D(f)^i))$ in the case where $D(f)^i$ is an identification component of $D(f)$. On the other hand, $ord_u(f_1(\varphi_i(u)))=2m(f(D(f)^i))$ if $D(f)^i$ is a fold component one, where $ord_u(p(u))$ denotes the degree of the non-zero term of lowest degree of $p(u) \in \mathbb{C}\lbrace u \rbrace$. Since we choose $H$ as the plane $X=0$, we have that $\tilde{\gamma}:=V(f_1(x,y)) \subset \mathbb{C}^2$. Hence we have that 

\begin{center}
$i(D(f)^i,\tilde{\gamma})=ord_u f_1(\varphi_{i,1}(u),\varphi_{i,2}(u))=ord_u f_1(\varphi_i(u))$,
\end{center}

\noindent see for instance \cite[page 174]{greuellivro}. If $D(f)^i \in IC(D(f))$, then $ord_u f_1(\varphi_i(u))=m(f(D(f)^i)$. On the other hand, if $D(f)^i \in FC(D(f))$ then $ord_u f_1(\varphi_i(u))=2m(f(D(f)^i)$. Suppose that there are $2k$ irreducible components of $D(f)$ in $IC(D(f))$ and let $(D(f)^{i_1},D(f)^{j_1})$, $\cdots$, $(D(f)^{i_{k}},D(f)^{j_{k}})$ be all pairs of identifications components of $D(f)$. Let $D(f)^{s_1},\cdots,D(f)^{s_p}$ be all fold components of $D(f)$. Then

\begin{center}
$i(D(f), \tilde{\gamma}(f))= \left( \displaystyle \sum_{q=1}^{k} 2 \cdot m(f(D(f)^{i_q})) \right)+ \left( \displaystyle \sum_{q=1}^{p} 2 \cdot m(f(D(f)^{s_q})) \right)=2m(f(D(f)))$.
\end{center}

\noindent (b) It follows by (a) and the property that $i(C^1,C^2)\geq m(C^1,0)\cdot m(C^2,0)$.\\

\noindent (c) Let $W^1,\cdots,W^{r+s}$ be the irreducible components of $W=D(f) \cup \tilde{\gamma}$, which is a disjoint union. Hence, $W^i$ is an irreducible component either of $D(f)$ or $\tilde{\gamma}$. It is convenient to give a different notation for branches in $D(f)$ and $\tilde{\gamma}$. Let $\tilde{\gamma}^1, \cdots, \tilde{\gamma}^s$ and $D(f)^1,\cdots, D(f)^r$ be the irreducible components of $\tilde{\gamma}$ and $D(f)$, respectively. By \cite[Corollary 1.2.3]{buch} and (a), we have that

\begin{center}
$\mu(W,0)= \displaystyle \sum_{q=1}^{r+s} (\mu((W^q)-1) \ + 2 \left( \displaystyle \sum_{q<j} i(W^q,W^j) \right) +1$ .
\end{center}

However, we have that the intersection multiplicity for plane curves has the additive property. Hence, we have that

\begin{flushleft}
$\mu(W,0)=$\\
$= \displaystyle \sum_{q=1}^r (\mu(D(f)^q)-1)+ \displaystyle \sum_{q=1}^s (\mu(\tilde{\gamma}^q)-1) + \displaystyle 2 \left(\sum_{1<q,j<r} i(D(f)^q,D(f)^j)+\displaystyle \sum_{1<q,j<r} i(\tilde{\gamma}^q,\tilde{\gamma}^j)+\displaystyle  \sum_{q=1}^s \sum_{j=1}^r i(\tilde{\gamma}^q,D(f)^j) \right)+1$\\

$= \left( \displaystyle \sum_{q=1}^r (\mu(D(f)^q)-1)+ \displaystyle 2 \left(\sum_{1<q,j<r} i(D(f)^q,D(f)^j)\right)+1 \right) + \left( \displaystyle \sum_{q=1}^s (\mu(\tilde{\gamma}^q)-1) + \displaystyle 2 \left( \sum_{1<q,j<r} i(\tilde{\gamma}^q,\tilde{\gamma}^j)\right) + 1 \right)$\\

$ \ \ $\\

$ \ \ \ \  + \displaystyle 2 i(D(f), \tilde{\gamma}) -1$\\

$ \ \ $\\

$= \mu(D(f))+\mu(\tilde{\gamma})+4m(f(D(f))-1$,
\end{flushleft}

\noindent where the last equality follows by (a) and each Milnor number is calculated at the origin.\end{proof}\\

For convenience of the reader, we write Theorem \ref{main result 3} again.

\begin{theorem}\label{main result 3.1} Let $f:(\mathbb{C}^2,0)\rightarrow(\mathbb{C}^3,0)$ be a finitely determined map germ and let $F:(\mathbb{C}^2 \times \mathbb{C},0)\rightarrow (\mathbb{C}^3 \times \mathbb{C},0)$, $F=(f_t,t)$, be an unfolding of $f$. Set $W(f_t):=D(f_t)\cup f_t^{-1}(\gamma_t)$, where $\gamma_t$ is the transversal slice of $f_t(\mathbb{C}^2)$. Then

\begin{center}
$F$ is Whitney equisingular $ \ \ \ $ $\Leftrightarrow$ $ \ \ \ $ $\mu(W(f_t),0)$ is constant.
\end{center}
\end{theorem}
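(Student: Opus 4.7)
The approach is to combine the identity from Lemma \ref{lemmaaux4}(c) with semicontinuity in the unfolding parameter, and then invoke the Marar-Nuño-Ballesteros-Peñafort-Sanchis criterion \cite[Theorem 5.3]{ref9}, which characterizes the Whitney equisingularity of $F$ by the simultaneous constancy of $\mu(D(f_t),0)$ and $\mu(\gamma_t,0)$. The plan is therefore to reduce constancy of the single invariant $\mu(W(f_t),0)$ to these two already-understood conditions.

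The first step is a decomposition via semicontinuity. Each summand on the right-hand side of
\[
\mu(W(f_t),0) \;=\; \mu(D(f_t),0) + \mu(\tilde{\gamma}_t,0) + 4\, m(f_t(D(f_t))) - 1
\]
is upper semicontinuous in $t$ at the origin: $\mu(D(f_t),0)$ and $\mu(\tilde{\gamma}_t,0)$ because they are Milnor numbers of reduced plane curves in a flat family (reducedness of $D(f_t)$ follows from Theorem \ref{criterio}, and reducedness of $\tilde{\gamma}_t$ from genericity of the slicing hyperplane), while $m(f_t(D(f_t)))$ is a multiplicity of an analytic germ, which is upper semicontinuous as well. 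Consequently, constancy of $\mu(W(f_t),0)$ is equivalent to the simultaneous constancy of the three quantities $\mu(D(f_t),0)$, $\mu(\tilde{\gamma}_t,0)$ and $m(f_t(D(f_t)))$.

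The second step is to relate the pair $(\mu(\tilde{\gamma}_t,0),\, m(f_t(D(f_t))))$ to $\mu(\gamma_t,0)$ under the assumption that $\mu(D(f_t),0)$ is constant. The map $f_t|_{\tilde{\gamma}_t}\colon \tilde{\gamma}_t \to \gamma_t$ is a finite, generically $1$-to-$1$ morphism that becomes $2$-to-$1$ precisely on $D(f_t)\cap \tilde{\gamma}_t$; by Lemma \ref{lemmaaux4}(a), this locus has total length $2m(f_t(D(f_t)))$. Applying the formula $\mu = 2\delta - r + 1$ together with the additivity of $\delta$ along the partial normalization induced by $f_t$, and the identification/fold decomposition of Section \ref{foldcomp}, one should obtain an explicit relation of the form
\[
\mu(\gamma_t,0) \;=\; \mu(\tilde{\gamma}_t,0) + 2\, m(f_t(D(f_t))) + c(f_t),
\]
where $c(f_t)$ depends only on the branch combinatorics of $D(f_t)$. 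The main obstacle is showing that $c(f_t)$ is constant once $\mu(D(f_t),0)$ is; this should reduce to the classical topological triviality of $\mu$-constant families of reduced plane curves, which keeps the number of branches (and hence their labelling as identification or fold components) fixed. Once this is in place, both directions of the theorem follow by combining the decomposition of the first step, the displayed relation, and \cite[Theorem 5.3]{ref9}.
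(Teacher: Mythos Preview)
Your first step coincides with the paper's argument: Lemma~\ref{lemmaaux4}(c) together with upper semicontinuity of each summand reduces constancy of $\mu(W(f_t),0)$ to the simultaneous constancy of $\mu(D(f_t),0)$, $\mu(\tilde{\gamma}(f_t),0)$ and $m(f_t(D(f_t)))$. This is exactly how the paper proceeds, and it is the entire content of the proof.

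Your second step is a detour the paper does not take. After the decomposition, the paper invokes \cite[Theorem~5.3]{ref9} immediately and stops. The reason this suffices is that the cited result contains more than the single equivalence paraphrased in the introduction: among its equivalent characterizations of Whitney equisingularity is the constancy of the pair $\bigl(\mu(D(f_t)),\, m(f_t(D(f_t)))\bigr)$, so one never has to translate anything back into $\mu(\gamma_t,0)$, and the constancy of $\mu(\tilde{\gamma}(f_t),0)$ plays no role beyond being one of the semicontinuous summands. Your proposed relation $\mu(\gamma_t,0)=\mu(\tilde{\gamma}_t,0)+2\,m(f_t(D(f_t)))+c(f_t)$, with $c(f_t)$ determined by the branch combinatorics of $D(f_t)$, is plausible but---as you yourself flag---not carried out: it would require the $\delta$-formula for the partial normalization $f_t|_{\tilde{\gamma}_t}$, a careful count of how identification and fold branches of $D(f_t)$ meet $\tilde{\gamma}_t$, and the stability of that count in a $\mu$-constant family of plane curves. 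All of this extra bookkeeping is bypassed by reading \cite[Theorem~5.3]{ref9} in its full form rather than only the version quoted in the introduction.
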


\begin{proof} By Lemma \ref{lemmaaux4}(b), we have that 

\begin{equation}\label{eq10}
\mu(W_t,0)=\mu(D(f_t),0)+4m(f_t(D(f_t)))+\mu(\tilde{\gamma}(f_t),0)-1
\end{equation}

\noindent Now the result follows by \cite[Theorem 5.3]{ref9} and the upper semi-continuity of the invariants in (\ref{eq10}).\end{proof}\\

An interesting case is when $\tilde{\gamma}$ is transversal to $D(f)$. In this case, the intersection multiplicity $i(D(f),\tilde{\gamma})$ is simply the product of the multiplicities of $D(f)$ and $\tilde{\gamma}$. This case is explored in the following corollary.

\begin{corollary}\label{app1} Let $f:(\mathbb{C}^2,0)\rightarrow(\mathbb{C}^3,0)$ be a finitely determined map germ and let $F:(\mathbb{C}^2 \times \mathbb{C},0)\rightarrow (\mathbb{C}^3 \times \mathbb{C},0)$, $F=(f_t,t)$, be an unfolding of $f$. Suppose now that $\tilde{\gamma}$ is transversal to $D(f)$. Then\\

\noindent (a) $2m(f(D(f))= m(D(f))\cdot m(\tilde{\gamma})$.\\

In particular, if $f$ has corank $1$, then $m(D(f))=2m(f(D(f)))$.\\

\noindent (b) If $F$ is topologically trivial, then $m(f_t(D(f_t)))$ is constant. In addition, if $f$ has corank $1$, then $F$ is Whitney equisingular.

\end{corollary}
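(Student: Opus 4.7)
The plan is to combine Lemma \ref{lemmaaux4} with the main theorem (Theorem \ref{main result 3.1}) and with the fact that multiplicity and topological type are topological invariants of plane curve germs.

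For part (a), I would start from the characterization of transversality of two plane curve germs at $0$,
\[
i(D(f),\tilde{\gamma}(f))=m(D(f),0)\cdot m(\tilde{\gamma}(f),0),
\]
and substitute the identity $i(D(f),\tilde{\gamma}(f))=2m(f(D(f)))$ from Lemma \ref{lemmaaux4}(a). This is the first formula in (a). For the corank $1$ addendum, I would use a normal form $f=(x,f_2,f_3)$. A generic plane in the target may be written as $H=V(X-\alpha Y-\beta Z)$ for generic small $(\alpha,\beta)$, so $\tilde{\gamma}=V(x-\alpha f_2-\beta f_3)$, which is smooth at $0$ because its partial derivative in $x$ is nonzero for generic $(\alpha,\beta)$. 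Thus $m(\tilde{\gamma})=1$ and the previous equality collapses to $m(D(f))=2m(f(D(f)))$.

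For the first assertion of part (b), assume $F$ is topologically trivial. I would argue that the families of plane curve germs $\lbrace (D(f_t),0) \rbrace_t$ and $\lbrace (\tilde{\gamma}(f_t),0) \rbrace_t$ are topologically trivial (the source trivialization of $F$ restricts to a trivialization of $D(f_t)$, while for $\tilde{\gamma}(f_t)=f_t^{-1}(\gamma_t)$ one combines this with the independence of the topological type on the generic choice of plane). Since multiplicity is a topological invariant of plane curve germs (Zariski), both $m(D(f_t),0)$ and $m(\tilde{\gamma}(f_t),0)$ are constant in $t$. Transversality is open in parameters, so $\tilde{\gamma}(f_t)$ stays transversal to $D(f_t)$ for small $t$, and part (a) applied to $f_t$ yields $m(f_t(D(f_t)))=\tfrac{1}{2}m(D(f_t),0)\cdot m(\tilde{\gamma}(f_t),0)$, a constant function of $t$.

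For the final statement, suppose additionally that $f$ has corank $1$. Then $\tilde{\gamma}(f_t)$ is smooth at $0$ for each small $t$, so $\mu(\tilde{\gamma}(f_t),0)=0$ trivially. The topological triviality of the family $\lbrace D(f_t) \rbrace_t$ then yields constancy of $\mu(D(f_t),0)$. Combining these with the already established constancy of $m(f_t(D(f_t)))$ and inserting into the formula
\[
\mu(W(f_t),0)=\mu(D(f_t),0)+\mu(\tilde{\gamma}(f_t),0)+4m(f_t(D(f_t)))-1
\]
from Lemma \ref{lemmaaux4}(c), I conclude that $\mu(W(f_t),0)$ is constant in $t$, and Theorem \ref{main result 3.1} gives that $F$ is Whitney equisingular. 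The hard part is the first step of (b), namely showing that the topological trivialization of $F$ descends to topological trivializations of the plane curve families $D(f_t)$ and $\tilde{\gamma}(f_t)$; the rest is a direct application of Lemma \ref{lemmaaux4} and the main theorem.
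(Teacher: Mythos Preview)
The paper states this corollary without proof, leaving it as an immediate consequence of Lemma \ref{lemmaaux4} and Theorem \ref{main result 3.1}. Your proposal fills in exactly the argument the paper's setup suggests: part (a) is the substitution of Lemma \ref{lemmaaux4}(a) into the equality $i(D(f),\tilde{\gamma})=m(D(f))\cdot m(\tilde{\gamma})$ that characterizes transversality, together with the smoothness of $\tilde{\gamma}$ in the corank $1$ normal form; part (b) feeds the resulting constancies into Lemma \ref{lemmaaux4}(c) and then into Theorem \ref{main result 3.1}.

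One remark on the step you flag as the ``hard part''. You do not actually need a full topological trivialization of the family $\tilde{\gamma}(f_t)$; you only need constancy of $m(\tilde{\gamma}(f_t))$ and, in the corank $1$ case, of $\mu(\tilde{\gamma}(f_t))$. In corank $1$ both are immediate since $\tilde{\gamma}(f_t)$ is smooth for every $t$, so no trivialization argument is required there. For the constancy of $m(f_t(D(f_t)))$ in general, once $m(D(f_t))$ and $m(\tilde{\gamma}(f_t))$ are constant you can avoid arguing that transversality is open and instead squeeze with upper semicontinuity: Lemma \ref{lemmaaux4}(a),(b) give
\[
m(D(f_t))\cdot m(\tilde{\gamma}(f_t)) \;\leq\; 2m(f_t(D(f_t)))=i(D(f_t),\tilde{\gamma}(f_t)) \;\leq\; i(D(f_0),\tilde{\gamma}(f_0))=m(D(f_0))\cdot m(\tilde{\gamma}(f_0)),
\]
and the outer terms agree. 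The genuinely nontrivial input that the paper also leaves implicit is that topological triviality of $F$ forces the family $(D(f_t),0)$ to be equisingular (hence $m(D(f_t))$ and $\mu(D(f_t))$ constant); this is standard in the literature on these map germs but deserves a reference rather than the one-line justification you give.
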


\begin{corollary} If $f:(\mathbb{C}^2,0)\rightarrow (\mathbb{C}^3,0)$ is a corank $2$ finitely determined double fold map germ, i.e. a map germs of the form $f(x,y)=(x^2,y^2,h(x,y))$, then

\begin{center}
 $m(D(f))=m(f(D(f)))$.
\end{center}

\end{corollary}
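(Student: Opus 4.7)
The strategy is to compute $m(D^i)$ and $m(f(D^i))$ branch by branch, exploiting the explicit form of a double fold and the natural action of $G = \{\pm 1\}^2$ on the source by sign changes. First I would observe that, for $f(x,y) = (x^2, y^2, h(x,y))$, the equation $f(x,y) = f(x',y')$ forces $(x',y') = g \cdot (x,y)$ for some $g \in G$ subject to the extra condition $h(x,y) = h(g \cdot (x,y))$. Hence every irreducible branch $D^i$ of $D(f)$ arises from identifying points of the form $(x,y) \sim g \cdot (x,y)$ for a unique nontrivial $g \in G$: either $D^i$ is $g$-invariant, making it a fold component with $f|_{D^i}$ generically $2{:}1$, or $D^i$ pairs with another branch $D^{i'} = g \cdot D^i$, forming an identification pair with $f|_{D^i}$ generically $1{:}1$.

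Fix such a branch $D^i$ with primitive parametrization $\varphi(u) = (\varphi_1(u), \varphi_2(u))$, and set $a = \mathrm{ord}_u \varphi_1$, $b = \mathrm{ord}_u \varphi_2$, so that $m(D^i) = \min(a,b)$. Because $f$ has corank $2$, $h \in \mathfrak{m}^2$, and a direct substitution gives $\mathrm{ord}_u h(\varphi) \geq 2 \min(a,b)$. Therefore
\begin{equation*}
\mathrm{ord}_u (f \circ \varphi) \,=\, \min\bigl(2a,\, 2b,\, \mathrm{ord}_u h(\varphi)\bigr) \,=\, 2 \min(a,b).
\end{equation*}

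Splitting into cases: for a fold branch under $g$, $\varphi$ conjugates $g$ to the involution $u \mapsto -u$ and $f \circ \varphi$ becomes a function of $u^2$; it is therefore a $2{:}1$ parametrization of $f(D^i)$, giving $m(f(D^i)) = \tfrac12 \mathrm{ord}_u (f \circ \varphi) = \min(a,b) = m(D^i)$. For an identification pair $(D^i, D^{i'})$, $f \circ \varphi$ is a primitive $1{:}1$ parametrization of the common image $f(D^i) = f(D^{i'})$, so $m(f(D^i)) = \mathrm{ord}_u (f \circ \varphi) = 2 \min(a,b) = m(D^i) + m(D^{i'})$, where the last equality uses that the sign-change $g$ preserves multiplicities. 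Summing over the branches of $D(f)$ --- once per branch in the source, once per image curve in the target --- the fold and identification contributions match term by term, yielding $m(D(f)) = m(f(D(f)))$.

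The main technical input is the bound $\mathrm{ord}_u h(\varphi) \geq 2 \min(a,b)$: the corank-$2$ hypothesis is essential here, since a linear term in $h$ would drop this order and break the equality. A secondary check is that for finitely determined $f$ the three loci $D_g := V\bigl(h(x,y) - h(g \cdot (x,y))\bigr)$ with $g \in G \setminus \{1\}$ share no common branch (they meet only at $0$), so each branch of $D(f)$ belongs to exactly one such $D_g$ and is classified unambiguously as fold or identification.
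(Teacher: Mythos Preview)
Your argument is correct, but it takes a different route from the one the paper has in mind. In the paper the corollary is placed immediately after Corollary~\ref{app1}, and the intended proof is a two-line application of that result: for a corank~$2$ double fold one has $h\in\mathfrak{m}^2$, so the pullback of a generic plane $H=V(aX+bY+cZ)$ is $\tilde\gamma=V(ax^2+by^2+ch(x,y))$, a curve of multiplicity~$2$ whose tangent cone $V(ax^2+by^2)$ consists of two generic lines that avoid the (finitely many) tangent directions of $D(f)$. Thus $\tilde\gamma$ is transversal to $D(f)$, and Corollary~\ref{app1}(a) gives $2m(f(D(f)))=m(D(f))\cdot m(\tilde\gamma)=2m(D(f))$.

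Your approach instead bypasses Lemma~\ref{lemmaaux4} and Corollary~\ref{app1} entirely, working branch by branch via the $G=\{\pm1\}^2$-action. This is more elementary and more informative: it shows that the equality $m(D(f))=m(f(D(f)))$ actually holds at the level of individual fold branches and identification pairs, not merely in the aggregate, and it makes transparent exactly where the corank-$2$ hypothesis enters (the bound $\mathrm{ord}_u h(\varphi)\ge 2\min(a,b)$). The paper's route is much shorter once its machinery is in place, but yours is self-contained and would survive in contexts where one does not have a convenient transversal slice at hand. One small point worth tightening in your write-up: the ``secondary check'' that distinct branches or pairs have distinct images is really the statement that a finitely determined $f$ has no curve of triple points (Mather--Gaffney), rather than a literal disjointness of the loci $D_g$, since the coordinate axes can sit trivially inside some $D_g$.
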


\begin{flushleft}
\textit{Acknowlegments:} We would like to thank Ruas and Nuño-Ballesteros for many helpful conversations, suggestions and comments on this work.
\end{flushleft}

\small

\end{document}